\title[Positive Entropy Through Pointwise Dynamics]
      {Positive Entropy Through Pointwise Dynamics}
\author[A. Arbieto and E. Rego]{A. Arbieto and E. Rego}
\address{Instituto de Matem\'atica, Universidade Federal do Rio de Janeiro, P. O. Box 68530, 21945-970 Rio de Janeiro, Brazil.}
\email{arbieto@im.ufrj.br}
\email{elias@im.ufrj.br}
\thanks{MSC code: 37B40. Keywords: topological entropy, expansive and shadowing .A. A. was partially supported by CNPq, FAPERJ and PRONEX/DS from Brazil. E. R. was partially supported by CAPES from Brazil.}
\newtheorem{theorem}{Theorem}
\newtheorem{corollary}[theorem]{Corollary}
\newtheorem{lemma}[theorem]{Lemma}
\newtheorem{proposition}[theorem]{Proposition}
\theoremstyle{definition}
\newcommand{\de} {\delta}
\newcommand{\si} {\sigma}       \newcommand{\Si}{\Sigma}
\newcommand{\N}{\mathbb{N}}
\newcommand{\eps}{\varepsilon}
\begin{document}

\begin{abstract}
 We define some pointwise properties of topological dynamical systems and give pointwise conditions to guarantee the positiveness of the topological entropy of such a system.
 We also give sufficient conditions to obtain positive topological entropy for maps which are approximated by maps with the shadowing property in a uniform way. 
\end{abstract}

\maketitle

\section{Introduction}

Topological entropy is a quantity that measures the complexity of a dynamical system. In particular its positiveness implies some chaotic behavior of the orbits. Smale in \cite{SM}  constructed a Horseshoe which is an example of a dynamical system with positive entropy. Actually, a Horseshoe appears if there exists a transversal homoclinic point. In particular, this is used in hyperbolic dynamics, to show that any non-trivial hyperbolic basic set has positive entropy, see \cite{ROB}.

Indeed, it is proved that any hyperbolic basic set has a Markov partition. Thus, it is semi-conjugated to a subshift of finite type. If this subshift is actually a full shift then it has positive entropy. So it is a natural question to decide when a dynamical system possessess an invariant set which is semi-conjugated to a full shift. In this paper we give some sufficient conditions to show this using pointwise dynamics.

Some global dynamical properties can be obtained through analogous properties defined on points, this is what we call pointwise dynamics. For instance, transitivity can be obtained by the existence of a point with dense orbit. 

Apart from the topological entropy, some well known dynamical properties are the shadowing property and positive expansiveness.  The former says that trajectories which allow errors are approximated by real ones. The latter says that different orbits must be apart in some future iterate. These properties were studied in the pointwise viewpoint. For instance, in \cite{morales} C.A. Morales gave the definition of shadowable point and  proved that for compact metric spaces the shadowing property is equivalent to all points being shadowable.

For expansiveness, In \cite{RD}, Reddy gives the notion of a positive  expansive point. However, it is not clear whether positive expansiveness of all points implies positive expansivess of the map. Actually, this is false, a counter-example appears in section 5. However, it is possible to define uniform positive expansive points. Once more, if all points are uniformly positive expansive then the map is positive expansive, see section 2.

In this article $(X,d)$ will be a compact metric space and $f:X\to X$ will be a continuous map. We also recall that a periodic point is a point with finite orbit. A wandering point is a point with a neighborhood such that its iterates never intersect it. 

Using the above concepts and some methods developed in  \cite{moop} we obtain our main result which can be stated as follows. 

\begin{theorem}\label{exp}
If there exists a non-periodic point $x$ which is shadowable, uniformly positively expansive, nonwandering and non-isolated, then there exists an invariant closed subset $Y$ of $X$, such that $f|_{Y}$ is topologically conjugated to the shift map. In particular, $f$ has positive topological entropy. 
\end{theorem}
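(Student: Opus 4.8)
The plan is to realize a full two-shift inside $X$ by coding admissible concatenations of two finite pseudo-orbit loops based at $x$. Let $c>0$ be the uniform positive expansivity constant at $x$, fix $\epsilon\in(0,c/8)$, and let $\delta\in(0,\epsilon)$ be a shadowing constant for $x$ at scale $\epsilon$, shrunk so that continuity of $f$ turns $\delta$-closeness into $\epsilon$-closeness after one step. Suppose for the moment that we have produced two $\delta$-pseudo-orbit loops of a common length $N$ based at $x$, say $\alpha=(x,a_1,\dots,a_{N-1})$ and $\beta=(x,b_1,\dots,b_{N-1})$, each closing up in the sense that $d(f(a_{N-1}),x)<\delta$ and similarly for $\beta$, and with the separation property $\max_i d(a_i,b_i)>2\epsilon$. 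Concatenating blocks according to a sequence $\omega\in\{\alpha,\beta\}^{\N}$ yields a $\delta$-pseudo-orbit $P(\omega)$ starting at $x$; since $x$ is shadowable there is a genuine orbit $\epsilon$-shadowing $P(\omega)$, and since $x$ is positively expansive with constant $c>2\epsilon$ this shadowing point is unique. I would define $\pi(\omega)$ to be this point and $Y=\pi(\{\alpha,\beta\}^{\N})$; the whole theorem then reduces to checking that $\pi$ is an equivariant homeomorphism onto $Y$, and to the construction of $\alpha,\beta$.

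To build the loops I would first use that $x$ is nonwandering: for every radius $\eta$ there are $p\in B(x,\eta)$ and $m\ge1$ with $f^m(p)\in B(x,\eta)$, and taking $\eta$ small enough that $d(f(x),f(p))<\delta$, the block $(x,f(p),\dots,f^{m-1}(p))$ is a $\delta$-pseudo-orbit loop based at $x$; I fix a length $N$ and call a basic such loop $\alpha$. For the second, separated loop I would use non-isolation together with positive expansiveness: choosing points $y\to x$ with $y\ne x$, the first time $s(y)$ at which $d(f^{s}x,f^{s}y)$ exceeds $2\epsilon$ tends to infinity, by uniform continuity of the iterates on the compact space $X$, while expansiveness guarantees that the orbit of $y$ eventually leaves the $c$-neighborhood of the orbit of $x$. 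Thus arbitrarily close to $x$ there are points whose orbit first $\epsilon$-tracks that of $x$ and then genuinely separates from it by more than $2\epsilon$; grafting such an excursion onto a return supplied by the nonwandering property, and padding with copies of the return loop to reach the common length $N$, produces a loop $\beta$ based at $x$ of length $N$ that differs from $\alpha$ by more than $2\epsilon$ at the separation coordinate. Non-periodicity of $x$ is what makes this excursion genuinely available: it prevents the nearby orbits from collapsing onto a single finite orbit, so that $\beta$ is not forced to coincide with $\alpha$.

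With $\alpha,\beta$ in hand the coding properties are comparatively routine. Equivariance $\pi\circ\sigma=f\circ\pi$ follows because $P(\sigma\omega)$ is the one-block shift of $P(\omega)$ and both $\pi(\sigma\omega)$ and $f(\pi(\omega))$ $\epsilon$-shadow it, so uniqueness forces them equal; this also gives $f(Y)=Y$. Injectivity follows because if $\omega\ne\omega'$ they first disagree in some block, where $P(\omega)$ and $P(\omega')$ differ by more than $2\epsilon$ at the separation coordinate; the two shadowing orbits, each within $\epsilon$ of its pseudo-orbit, then differ by a positive amount at that time, so $\pi(\omega)\ne\pi(\omega')$. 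Continuity of $\pi$ is where I would use the word \emph{uniform}: if $\omega^{(k)}\to\omega$ then $P(\omega^{(k)})$ and $P(\omega)$ agree on longer and longer initial segments, so the shadowing orbits stay $2\epsilon$-close, hence $\le c$-close, on longer and longer time intervals, and the uniform expansivity modulus of Section~2 forces $d(\pi(\omega^{(k)}),\pi(\omega))\to0$. A continuous equivariant bijection from the compact space $\{\alpha,\beta\}^{\N}$ onto the Hausdorff space $Y$ is a conjugacy, and since the domain is the full two-shift we obtain $h(f)\ge h(f|_Y)=\log 2>0$.

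The main obstacle is precisely the construction in the second paragraph: the nonwandering property only returns \emph{some} orbit to a neighborhood of $x$, with no control on the return time, so forcing the excursion loop $\beta$ to have the \emph{same} length $N$ as $\alpha$ while keeping the $2\epsilon$ separation requires care, and one must interleave the excursion with copies of the basic return loop and adjust $N$ to a common value. A secondary subtlety is that the uniform expansivity modulus must be applied to the shadowing points $\pi(\omega^{(k)})$ and $\pi(\omega)$, which lie near $x$ rather than at $x$; I would either invoke the fact, from Section~2, that uniform positive expansiveness at $x$ passes to a neighborhood, or bypass equal lengths altogether by coding the edge shift of the finite graph with one vertex $x$ and the two loops as subdivided cycles, which is a subshift of finite type of positive entropy and is still topologically conjugate, via the same shadowing-and-expansiveness argument, to $f$ restricted to the resulting invariant closed set.
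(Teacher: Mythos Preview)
Your coding framework---concatenate two $\delta$-pseudo-orbit loops based at $x$, shadow, and use expansiveness to make the coding map a bijection---is exactly the machinery the paper invokes (Lemmas~\ref{k1} and~\ref{k2}, due to Kawaguchi). The substantive difficulty, as you correctly locate, is producing the two separated loops, and here your construction of $\beta$ has a real gap. Your ``excursion'' argument picks $y$ near $x$ whose orbit separates from the orbit of $x$ at some time $s(y)$; but nothing forces $f^{s(y)}(y)$ (or any later iterate of $y$) to come back near $x$, so there is no way to close this into a loop based at $x$. ``Grafting onto a return supplied by nonwandering'' does not help: nonwandering only provides points \emph{near $x$} that return near $x$, not a mechanism to return from the far-away point $f^{s(y)}(y)$. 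There is a second mismatch: your excursion separates $y$ from the orbit of $x$, whereas $\alpha$ is built from the orbit of a \emph{nearby} point $p$, not from the orbit of $x$ itself (you only have nonwandering, not recurrence), so even if the excursion closed up you would not yet have $\max_i d(\alpha_i,\beta_i)>2\epsilon$.

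The paper sidesteps both issues by never using an ``excursion'' at all. It applies nonwandering twice to obtain two \emph{distinct} returning points $x_p,x_q\in B_\eta(x)$ (non-isolation and non-periodicity are what allow one to puncture finitely many points from $B_\eta(x)$ and still have a neighborhood of $x$ for the second application). Both loops then close up by construction. The separation comes from applying uniform positive expansiveness directly to the pair $x_p,x_q$---this is precisely why the \emph{uniform} version is needed, not merely expansiveness at $x$: neither $x_p$ nor $x_q$ is $x$. After that one pads with further returns so that the separation index lies inside both loops and takes a common period $m=k_1k_2$. A minor correction to your write-up: the equivariance you obtain is $\pi\circ\sigma=f^{N}\circ\pi$ (one block equals $N$ steps), so the conjugacy is between $(\Sigma,\sigma)$ and $(Y,f^{N})$; this is also how Lemma~\ref{k2} is stated.
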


In \cite{morales2}, further expansive-like properties have been studied. Including the concept of $n$-expansivity and countable-expansivity. The first one, allows at most $n$ different orbits to be  close, and the second one allows at most countable different orbits to be close. It is possible to give pointwise versions of these properties. We can modify our arguments in section 3 to obtain the following result. 

\begin{theorem}\label{nexp}
	If there exists a non-periodic point $x$ which is non-isolated, shadowable, positive uniformly $n$-expansive and nonwandering then there exists a closed invariant subset $Y$ of $X$, such that $f|_{Y}$ is topologically semi-conjugated to the shift map. In particular, $f$ has positive topological entropy. 
\end{theorem}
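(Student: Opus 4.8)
The plan is to retrace the construction used to prove Theorem \ref{exp} in Section 3, weakening only the step where positive expansivity is invoked. Recall that the proof of Theorem \ref{exp} builds from the point $x$ two distinct admissible ``blocks'' that can be concatenated freely. Since $x$ is nonwandering, for every small $\delta$ there is a return time and a point close to $x$ whose corresponding iterate is again close to $x$; this produces a periodic $\delta$-pseudo-orbit based at $x$. Because $x$ is non-isolated and non-periodic, one can produce a \emph{second} such loop that differs from the first at some coordinate by more than the expansivity constant $e$; call the two loops $\gamma_0$ and $\gamma_1$, of a common length $L$. Concatenating them according to a sequence $a\in\Sigma_2^{+}=\{0,1\}^{\mathbb N}$ yields, for each $a$, a $\delta$-pseudo-orbit $P(a)$ through $x$, and the shadowability of $x$ guarantees that each $P(a)$ is $\epsilon$-shadowed by a genuine orbit, with $\epsilon$ chosen so that $2\epsilon<e$ and so that the two loops stay distinguishable after an $\epsilon$-perturbation.

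I would then let $R_0,R_1$ be the positively separated $\epsilon$-neighbourhoods of the distinguishing coordinates of $\gamma_0,\gamma_1$, and define the itinerary map $h$ by recording, at each block-time $kL$, whether the orbit lies in $R_0$ or $R_1$. Setting $Y$ to be the closure of the set of all points $\epsilon$-shadowing some $P(a)$, the map $h\colon Y\to\Sigma_2^{+}$ is continuous (the regions are separated, so each coordinate of $h$ is locally constant), it satisfies $h\circ f=\sigma\circ h$, and it is surjective because every $a$ admits a shadowing point whose itinerary is exactly $a$. Since $f$ carries an orbit shadowing $P(a)$ to one shadowing the shift of $P(a)$, one checks as in Section 3 that $Y$ can be taken closed and $f$-invariant. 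This already exhibits $\sigma$ as a factor, that is, a semi-conjugacy, of $f|_Y$.

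The only place where Theorem \ref{exp} used the full strength of uniform positive expansivity is in asserting that the orbit shadowing a given $P(a)$ is \emph{unique}, which made $h$ injective and upgraded the semi-conjugacy to a conjugacy. Here I replace this by the positive uniform $n$-expansivity of $x$: with the expansivity constant $e$ valid throughout the region around the orbit, any two orbits that both $\epsilon$-shadow the same $P(a)$ stay within $2\epsilon<e$ of each other for all forward times, so $n$-expansivity bounds by $n$ the number of such orbits. Consequently each fibre $h^{-1}(a)$ has at most $n$ points; $h$ is a continuous, surjective, at most $n$-to-one factor map rather than a homeomorphism. Because topological entropy does not increase under factors, $h_{top}(f)\ge h_{top}(f|_Y)\ge h_{top}(\sigma)=\log 2>0$, which is the desired conclusion.

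The step I expect to be the main obstacle is the construction of the two genuinely distinct, freely concatenable blocks $\gamma_0,\gamma_1$ and the verification that $h$ remains \emph{well defined and continuous on the whole of $Y$} once uniqueness of the shadowing point is lost: one must check that all (at most $n$) orbits shadowing a fixed $P(a)$ carry the \emph{same} itinerary, so that passing to the closure does not create points sitting on the boundary between $R_0$ and $R_1$. This is exactly where the uniformity of the expansivity constant, as opposed to a constant depending on the base point, is essential, and where the arguments of Section 3 and of \cite{moop} must be transcribed with care.
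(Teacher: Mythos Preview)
Your outline misses the one place where $n$-expansivity is genuinely needed, and instead invokes it at a step where it is not required.

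You write that ``because $x$ is non-isolated and non-periodic, one can produce a second such loop that differs from the first at some coordinate by more than the expansivity constant $e$'', and later that ``the only place where Theorem \ref{exp} used the full strength of uniform positive expansivity is in asserting that the orbit shadowing a given $P(a)$ is unique''. Both claims are inaccurate. In the proof of Theorem \ref{exp}, uniform positive expansivity is already used to \emph{construct} the two separated loops: from two distinct nearby points $x_p\neq x_q$ in the expansivity neighbourhood $V$ one deduces the existence of $l$ with $d(f^l(x_p),f^l(x_q))>e$, and this is precisely $1$-expansivity. Under the weaker hypothesis of $n$-expansivity, two distinct points $x_p,x_q\in V$ may very well satisfy $\sup_{i\ge 0} d(f^i(x_p),f^i(x_q))<e$, so your passage from ``distinct'' to ``$e$-separated at some coordinate'' fails. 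Non-isolatedness and non-periodicity give you many distinct returns, but not, by themselves, a pair whose forward orbits separate.

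The paper fixes exactly this step: using that $x$ is non-isolated one manufactures not two but $n+2$ returning orbits $p_1,\dots,p_{n+2}$ in $U$ (all with a common return time $k$). The $n$-expansivity condition says that for a fixed $p_i$ at most $n$ of these can stay within $e$ of its forward orbit; by pigeonhole some $p_j$ must separate from $p_i$ at some iterate. These two are then used as $\gamma_0,\gamma_1$, and Lemma \ref{k2} already yields the semi-conjugacy onto $(\Sigma,\sigma)$ with no further appeal to expansivity. Your later use of $n$-expansivity to bound the fibres of $h$ by $n$ is a correct observation but is not needed for the statement: once the two blocks are $e$-separated and $2b<e$, the itinerary is well defined and the factor map of Lemma \ref{k2} is automatically continuous and surjective; injectivity is simply dropped, which is why the conclusion is a semi-conjugacy rather than a conjugacy.
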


However, the following is still an open question.

{\bf Question:}  Suppose that for every $x\in X$ every neighborhood of $x$ is uncountable. If there exists a non-periodic point $x$ which is shadowable, positive uniformly countable-expansive and nonwandering is it possible to show the existence of a closed invariant subset $Y$ of $X$, such that $f|_{Y}$ is topologically semi-conjugated to the shift map? Is it true that $f$ has positive topological entropy?  

As a consequence of the proof of the previous theorem,  we obtain the following result.

\begin{corollary}\label{per}
	 If $f$ is expansive, has the shadowing property and infinitely many periodic points, then $f$ has positive topological entropy. 
\end{corollary}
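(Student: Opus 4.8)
The plan is to produce, from the infinitude of periodic points, a single point to which Theorem~\ref{exp} applies, or, failing that, to re-run its construction directly. First I would use expansiveness to control the periodic points: for each $m\ge 1$ the iterate $f^m$ is again expansive, so any two distinct points fixed by $f^m$ are bounded away from one another by an expansive constant of $f^m$, and compactness of $X$ forces $\fix(f^m)$ to be finite. Consequently $\per(f)=\bigcup_{m\ge1}\fix(f^m)$ can be infinite only if the periods are unbounded, and then the infinite set $\per(f)$ must accumulate in $X$; let $x$ be a non-isolated point of $\overline{\per(f)}$.

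Next I would verify the remaining pointwise hypotheses at $x$. Since $\per(f)\subseteq\Om(f)$ and the nonwandering set $\Om(f)$ is closed, $x$ is nonwandering. Global expansiveness gives the uniform expansive property at $x$ with the common constant, and the shadowing property makes every point shadowable by Morales' characterization in \cite{morales}; in particular $x$ is shadowable. Thus $x$ is non-isolated, nonwandering, uniformly positively expansive and shadowable --- all the hypotheses of Theorem~\ref{exp} except possibly non-periodicity.

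This leaves a dichotomy. If $x$ can be chosen non-periodic, Theorem~\ref{exp} applies verbatim and yields a closed invariant set on which $f$ is conjugate to the shift, hence positive entropy. The delicate case is when every accumulation point of $\per(f)$ is periodic. Here I would pass to $g=f^{k}$, with $k$ the period of $x$: this turns $x$ into a fixed point, preserves expansiveness and shadowing, keeps infinitely many periodic points accumulating at $x$ (only finitely many lie in each finite $\fix(g^m)$, so their $g$-periods still tend to infinity), and satisfies $h(g)=k\,h(f)$, so that positive entropy for $g$ suffices.

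The main obstacle --- and the step demanding the most care --- is to rerun the construction from the proof of Theorem~\ref{exp} (equivalently Theorem~\ref{nexp}) with the fixed point $x$ and the long periodic orbits $O(p_n)$, $p_n\to x$, playing the role that the single non-periodic orbit played there. Expansiveness forces the orbit of $p_n$ to track the fixed point $x$ for an arbitrarily long time before separating by more than the expansive constant, which supplies exactly the branching that non-periodicity provided in Theorem~\ref{exp}. Shadowing then lets me realize, by genuine orbits, every concatenation of ``stay near $x$'' blocks with ``run once around $O(p_n)$'' excursions dictated by a prescribed symbol sequence, while expansiveness guarantees that distinct sequences produce orbits which eventually separate. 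The outcome is a closed invariant set on which $g$ is semi-conjugate onto a full shift on finitely many symbols; this forces $h(g)>0$, and therefore $h(f)>0$.
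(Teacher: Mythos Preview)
Your proposal is correct and follows essentially the same route as the paper: pick an accumulation point $x$ of $\per(f)$, verify the pointwise hypotheses of Theorem~\ref{exp} there, and in the case where $x$ is itself periodic, re-run the construction from the proof of Theorem~\ref{exp} using nearby periodic orbits to supply the two separated returning pseudo-orbits. The paper handles the periodic case more directly than you do: rather than passing to an iterate $g=f^{k}$ and invoking unbounded periods, it simply observes that the proof of Theorem~\ref{exp} only needs two distinct points in $U$ whose orbits return to $U$, and any two distinct periodic points in $U$ already do this --- so your detour through $\fix(f^m)$ being finite and the reduction to a fixed point, while correct, is unnecessary.
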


We remark that our methods can be used to obtain similar results for homeomorphisms, using expansivity instead positive expansivity.

In \cite{BW}, it is given an example of a 2-expansive map which is not expansive.  As an application, we show that this example has positive entropy despite of the base metric space being zero dimensional. See section 5.

Finally, we give other sufficient conditions to obtain positive entropy if the dynamics is approximated by some maps with the shadowing property in a uniform way. 

\begin{theorem}\label{pentropy}
Let $f_n:X\to X $ be a sequence of nonwandering maps converging uniformly to a map $f$. Suppose that $f_n$ has the uniform shadowing property. Then $f$ has positive entropy.	
\end{theorem}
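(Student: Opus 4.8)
The plan is to prove the theorem in two stages: first transfer the relevant dynamical features from the sequence to the limit map $f$, and then manufacture a full shift as a topological factor of an $f$-invariant set. First I would show that $f$ inherits the shadowing property and that its chain recurrent set is all of $X$. The chain recurrence is the easy part: since each $f_n$ is nonwandering we have $CR(f_n)=X$, and if $(z_j)$ is a $\delta/2$-chain for $f_n$ then it is a $\delta$-chain for $f$ as soon as $\sup_x d(f_n(x),f(x))<\delta/2$, because $d(f(z_j),z_{j+1})\le d(f(z_j),f_n(z_j))+d(f_n(z_j),z_{j+1})$; hence $CR(f)=X$. The shadowing is the delicate part, and is exactly where the word \emph{uniform} is spent: fix $\eps>0$ and let $\delta=\delta(\eps)$ be a shadowing constant valid simultaneously for every $f_n$. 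Given a $\delta/2$-pseudo-orbit $(x_i)$ of $f$, truncate it to length $N$; for $n$ large it is a $\delta$-pseudo-orbit of $f_n$, hence $\eps$-shadowed by a genuine $f_n$-orbit $(f_n^{\,i}y_N)$. Letting $N\rt\y$ (so $n\rt\y$) and extracting a convergent subsequence $y_N\rt y$, continuity of iterates together with $f_n\rt f$ uniformly gives $f_n^{\,i}y_N\rt f^{\,i}y$ for each fixed $i$, so $d(f^{\,i}y,x_i)\le\eps$ and $(x_i)$ is shadowed by the true $f$-orbit of $y$. This diagonal argument closes only because a single $\delta$ works for all the approximants at once.

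Next, combining shadowing with $CR(f)=X$, I would produce the positive entropy by exhibiting a full shift on two symbols as a factor. Shadowing turns chain-transitivity into genuine transitivity, so I would locate a nondegenerate chain-transitive component and pick two points $a\ne b$ in it; set $r=d(a,b)/3>0$ and let $\delta=\delta(r)$. Using chain-transitivity I can prescribe, for any sequence $\om\in\{0,1\}^{\N}$, a $\delta$-pseudo-orbit of $f$ that sits in $B(a,r)$ at time $T_k$ when $\om_k=0$ and in $B(b,r)$ at time $T_k$ when $\om_k=1$, the gaps $T_{k+1}-T_k$ being chosen large enough to run the connecting chains. Shadowing each such pseudo-orbit by a true orbit and taking the closure of the set of shadowing points yields a closed $f$-invariant set $Y$; reading off at the marked times the symbol determined by proximity to $a$ or to $b$ defines a continuous surjection $Y\rt\{0,1\}^{\N}$ semiconjugating a power of $f|_Y$ to the shift, so $h_{top}(f)\ge\log2>0$. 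These are the methods of \cite{moop} adapted to the present situation.

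The main obstacle is the second stage, and within it the guarantee that a \emph{nondegenerate} chain-transitive piece exists and that the coding lands on a genuine full shift rather than collapsing, i.e. that the entropy produced is strictly positive and not merely nonnegative. This is precisely where the hypotheses on the approximating sequence must be used: the nonwandering property of every $f_n$, transported through the uniform limit, has to be strong enough to prevent the chain recurrence of $f$ from degenerating into isolated classes, and the uniformity of the shadowing modulus is what lets the connecting pseudo-orbits be realized by honest orbits with controlled separation at the marked times. Verifying that the two shadowing orbits attached to sequences differing in a single coordinate remain $r$-apart at that coordinate, so that the factor map is onto, is the estimate I would treat most carefully.
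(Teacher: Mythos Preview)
Your first stage is essentially the paper's own argument: Lemma~\ref{usha} proves that $f$ inherits shadowing (your truncation to length $N$ is unnecessary, since once $\sup_x d(f_n(x),f(x))<\delta/2$ the \emph{whole} infinite $\delta/2$-pseudo-orbit of $f$ is already a $\delta$-pseudo-orbit of $f_n$; the paper then shadows by $y_n$ and passes to a limit point $y$ exactly as you do), and Lemma~\ref{nonw} proves that $f$ is nonwandering by the same transfer of chains you describe, combined with the fact that chain-recurrence plus shadowing gives nonwandering.

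Your second stage, however, diverges from the paper and runs into the obstacle you yourself flag. The paper does \emph{not} build the shift factor by hand. Instead it invokes an additional hypothesis that is absent from the theorem statement but used explicitly in the proof: that $X$ is connected. With $X$ connected, a result of Moothathu (\cite{moo}) says that a nonwandering map with shadowing on a connected compact space is topologically mixing; mixing gives sensitivity to initial conditions; and then another result from \cite{moo} (nonwandering $+$ shadowing $+$ a sensitive point $\Rightarrow$ positive entropy) finishes the argument in one line.

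The gap you identified --- the existence of a nondegenerate chain-transitive piece so that your two reference points $a\ne b$ can be chosen and the coding does not collapse --- is genuine and cannot be closed from the hypotheses as literally stated: if $X$ is a single point or a finite set, every $f_n$ is nonwandering with uniform shadowing and yet $h(f)=0$. Connectedness (together with $X$ having more than one point) is precisely what the paper uses to rule this out and to force the chain-recurrent set to be a single nontrivial chain-transitive class. So rather than trying to verify your separation estimate directly, you should add the connectedness hypothesis and then either follow the paper's route through \cite{moo}, or note that connectedness makes $X$ itself the nondegenerate chain-transitive component you need for your coding.
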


For this, we use the results developed in \cite{moo} to obtain an entropy point. We also give and use a slight improvement of a result due to Fedeli and Le Donne in \cite{fedeli}.

\section{Preliminaires}

In this section we give precise definitions of the concepts used in the statements of the results. 

Let $(X,d)$ be a metric compact space and $f:X\to X$ be a continuous map.
\vspace{0.1in}

\emph{Topological Entropy}

Let $V$ be a subset of $X$. A set $E\subset V$ is called a $(n,\eps)$-separated set if for every pair of distinct points $x,y\in E$, there exists some $0\leq i\leq n$ such that  $d(f^i(x),f^i(y))\geq \eps$. We set $s_n(V,\eps)$ as  the maximal cardinality of all $(n,\eps)$-separated subsets $E\subset V$.

The \emph{topological entropy} of $f$ is the following quantity
$$h(f)=\lim\limits_{\eps\to 0}\limsup\limits_{n\to\infty}\frac{1}{n}\log s_n(X,\eps).$$
 
This limit always exists, see \cite{WAL}. From this, one can define an entropy point $x$ as a point such that every closed neighborhood $V$ of $x$ satisfies: $$\lim\limits_{\eps\to0}\limsup\limits_{n\to\infty}\frac{1}{n}\log s_n(V,\eps)>0$$

Obviously, the existence of an entropy point guarantees that $f$ has positive topological entropy.

\vspace {0.1in}

\emph{The shift map}

Let $\Si$ be the set of the unilateral infinite sequences of zeros and ones. 
$$\Si=\{(s_i)_{i=1}^{\infty}; s_i\in\{0,1\}\}$$
Endowed with the metric $d(s,t)=\Si_{i=1}^{\infty}\frac{1}{2^{i}}|s_i-t_i|$, this is a compact metric space. 

		The map $\si:\Si\to\Si$ defined by $\si((s_i))=(s_{i+1})$ is called the shift map. It is well known that this map has topological entropy equals to $\log 2$.
\vspace {0.1in}

\emph{Expansiveness}

 We say that a map is positively expansive if there exists $e>0$ such that, if $d(f^i(x),f^i(y))<e$ for every $i\geq0$, then $x=y$. The number $e$ is called an expansiveness constant of $f$.

In \cite{RD} Reddy defined a pointwise version for expansiveness.

  \vspace{0.1in}
  \emph{We say that a point $x$ is a positively expansive point if there exists a constant $e>0$ such that  $$\{y\in X;\sup\limits_{i\geq0}d(f^i(x),f^i(y))<e\}=\{x\}.$$} 
  \vspace{0.1in}
However, as we shall see in section 5, even if all points are expansive points, we
can not be sure if the map f is expansive. .
 
 This motivate us to define what we called uniformly expansive point.  
  
  \vspace{0.1in}
  \emph{We say that a point $x$ is an uniformly positively expansive point if there exists a neighborhood $U$ of $x$ and a constant $e>0$ such that if $d(f^i(z),f^i(y))<e$ for every $j\geq 0$, whenever $y,z\in U$, then $y=z$.} 
  \vspace{0.1in}
  
Similarly, we define the following:
We say that a point is uniformly positive $n$-expansive, if  there exist a neighborhood $U$ of $x$ and $e>0$ such that if $y\in U$ then the cardinality of the set $$\{z\in U;\ \sup\limits_{i\geq 0}d(f^i(y),f^i(z))<e\}$$ is at most $n$.  

Finally, we say that a point $x$ is uniformly positive countable-expansive if  there exists a neighborhood $U$ of $x$ and $e>0$ such that for each  $y\in U$ 
the set $$\{z\in U;\  \sup\limits_{\i\geq 0}d(f^i(y),f^i(z))<e\}$$ is countable.

\begin{proposition} A map $f$ is positively expansive if, and only if , every point is positively uniformly expansive. 
\end{proposition}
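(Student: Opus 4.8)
The plan is to establish the two implications separately. The forward implication is essentially immediate, while the reverse requires a compactness argument to patch local data into a single global constant.

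First I would treat the forward direction. Suppose $f$ is positively expansive with expansiveness constant $e>0$. For an arbitrary point $x$, take the neighborhood $U=X$ together with the constant $e$. For any $y,z\in X$, the hypothesis $d(f^i(y),f^i(z))<e$ for all $i\geq 0$ forces $y=z$ by positive expansiveness; hence every point is uniformly positively expansive, witnessed by the pair $(X,e)$.

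Next I would treat the reverse direction, which is the substance of the statement. Suppose every point is uniformly positively expansive, and for each $x\in X$ fix a neighborhood $U_x$ and a constant $e_x>0$ as in the definition. The family $\{U_x\}_{x\in X}$ is an open cover of the compact space $X$, so I would extract a finite subcover $U_{x_1},\dots,U_{x_k}$ and let $\delta>0$ be a Lebesgue number for it, so that every subset of $X$ of diameter less than $\delta$ lies in some $U_{x_j}$. Setting $e=\min\{\delta,e_{x_1},\dots,e_{x_k}\}>0$, I claim this is an expansiveness constant for $f$. Indeed, if $x,y\in X$ satisfy $d(f^i(x),f^i(y))<e$ for all $i\geq 0$, then taking $i=0$ gives $d(x,y)<e\leq\delta$, so $\{x,y\}$ has diameter less than $\delta$ and is contained in some $U_{x_j}$; since also $d(f^i(x),f^i(y))<e\leq e_{x_j}$ for all $i\geq 0$, the uniform expansiveness at $x_j$ yields $x=y$.

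The main obstacle is precisely this reverse direction: the local constants $e_x$ vary from point to point and could in principle have infimum zero, so they cannot be combined directly. The finite subcover resolves this by reducing to a minimum over finitely many constants, and the Lebesgue number does the essential work of guaranteeing that any two points which remain $e$-close under all iterates already lie together in a single $U_{x_j}$ at time zero, which is exactly the hypothesis needed to invoke the local uniform expansiveness.
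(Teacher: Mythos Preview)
Your proof is correct and follows essentially the same approach as the paper: extract a finite subcover from the $U_x$, take a Lebesgue number, and set the global expansiveness constant to be the minimum of the Lebesgue number and the finitely many local constants $e_{x_j}$. The paper's argument is terser but identical in structure, and your added verification that the $i=0$ step forces both points into a single $U_{x_j}$ makes explicit exactly what the paper's ``clearly'' is hiding.
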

\begin{proof}
	Obviously, we just need to prove the converse.
	If every point of $X$ is uniformly positively-expansive, then we can cover $X$ with a finite number of open sets $U_{x_1},...,U_{x_n}$ given by the expansiveness of the points $x_1,...,x_n$. Set $e=\min\{e_{x_1},...,e_{x_n},\eta\}$, where $\eta$ is the Lebesgue number of the covering. Clearly $e$ is a expansiveness constant for $f$.
\end{proof}

The previous proposition allows us to recover expansiveness by its uniform pointwise version. We remark that the previous proof applies to the n-expansive and countable expansive case.  

\vspace {0.1in}

\emph{Shadowing}

 A set $\{x_i\}_{i=0}^\infty$ is a $\delta$-pseudo-orbit for $f$, if $d(f(x_i),x_{i+1})<\delta$ for every $i$.
A $\de$-pseudo orbit $\{x_i\}_{i=0}^\infty$ is $\eps$-shadowed by $x$ if $d(f^i(x),x_i)<\eps$ for every $i$. We say that a map $f:X\to X$ has the shadowing property if for every $\eps>0$ there exists $\delta>0$ such that any $\delta$-pseudo-orbit is  $\eps$-shadowed by some point in $X$.

We say that a point $x_0$ is shadowable if for every $\eps>0$ there is a $\de>0$ such that every $\de$-pseudo-orbit $\{x_n\}_{n\geq0}$ satisfying $x=x_0$ is $\eps$-shadowed. 

\vspace{0.1in}

We say that a sequence of maps $f_n:X\to X$ has the uniform shadowing property if every $f_n$ has the shadowing property and if given $\eps>0$, all maps $f_n$  have the same shadowing constant $\delta$ related to $\eps$.

\section{Proof of Theorem \ref{exp}}

In this section we will give a proof for Theorem \ref{exp}. We start stating a result due to Kawaguchi, N. which exhibits an abstraction of the techniques developed in \cite{moop} by Moothathu  and Oprocha.

\begin{lemma}[Lemma 2.3 in \cite{kawaguchi}]\label{k1}
	Let $f : X \to X$ be a continuous map and let $x \in sh^+_b( f )$ with $b > 0$.
	Given $e,\de > 0$ , let $z$ be an accumulation point of the forward orbit of $x$. Suppose that the following conditions are satisfied.
	\begin{itemize}
		\item $e > 2b$.
	\item Every $\de$-pseudo orbit $\{x_i\}
	_{i=0}^\infty$ of $f$ with $x_0 = x$ is $b$-shadowed by some point of X.
	\item There is an $e$-separated pair $\{z_i^0\}_{i=0}^m$ and $\{z_i^1\}_{i=0}^m$
    of periodic $\de$-pseudo-orbits of $f$ through $z$ with period
	$m$.(i.e. $d(z_i^0,z_i^1)>e$ for $i=0,..,m$)
\end{itemize}
	Then there exists an entropy point $w$ of $f$ such that $d(x, w)\leq b$, furthermore $h( f )\geq \frac{\log 2}{m}$.
	
\end{lemma}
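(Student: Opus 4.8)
The plan is to use the two periodic $\de$-pseudo-orbits as the two ``letters'' of a binary code, concatenate them to realize every sequence in $\{0,1\}^{\N}$ as a $\de$-pseudo-orbit issued from $x$, and then read off a full horseshoe from the shadowing points. Write the two loops as $\{z_i^0\}_{i=0}^m$ and $\{z_i^1\}_{i=0}^m$ with common endpoint $z_0^0=z_0^1=z_m^0=z_m^1=z$ (this is what ``through $z$'' provides, and it is exactly what allows the loops to be spliced), the interior separation $d(z_i^0,z_i^1)>e$ being the information we actually exploit. First I would fix, for each $\om=(\om_1,\om_2,\dots)\in\{0,1\}^{\N}$, the concatenated block $B^{\om}=z_0^{\om_1},\dots,z_{m-1}^{\om_1},z_0^{\om_2},\dots,z_{m-1}^{\om_2},\dots$; at every junction the closing inequality of a periodic $\de$-pseudo-orbit applies (because $z_m^{\om_k}=z=z_0^{\om_{k+1}}$), so $B^{\om}$ is a $\de$-pseudo-orbit. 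Since $z$ is an accumulation point of the forward orbit of $x$, I would choose $N$ with $d(f^N(x),z)<\de$ and prepend the genuine segment $x,f(x),\dots,f^{N-1}(x)$ to $B^{\om}$; the resulting sequence $p^{\om}$ is then a $\de$-pseudo-orbit with $p^{\om}_0=x$.

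By the second hypothesis each $p^{\om}$ is $b$-shadowed by some $w^{\om}\in X$, and since $p^{\om}_0=x$ this forces $d(w^{\om},x)\le b$ for every $\om$; in particular all the $w^{\om}$ lie in the compact ball $\ov{B(x,b)}$. The key estimate is the separation: if $\om,\om'$ first differ at coordinate $k$, then at a time $t=N+(k-1)m+i$ with $1\le i\le m-1$ the two pseudo-orbits sit at the $e$-separated points $z_i^{\om_k}$ and $z_i^{\om'_k}$, so $d(f^t(w^{\om}),f^t(w^{\om'}))\ge d(z_i^{\om_k},z_i^{\om'_k})-2b>e-2b$, which is positive precisely because of the hypothesis $e>2b$. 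Consequently, for each $n$ the $2^n$ points $\{w^{\om}:\om\in\{0,1\}^n\}$ (extended by any fixed tail) form an $(N+nm,\,e-2b)$-separated set, so $s_{N+nm}(X,e-2b)\ge 2^n$. Letting $n\to\infty$ along this subsequence gives $\limsup_n\frac1n\log s_n(X,e-2b)\ge\frac{\log 2}{m}$, and since $e-2b>0$ the definition of topological entropy yields $h(f)\ge\frac{\log 2}{m}$.

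It remains to upgrade ``a set of positive local entropy'' to a genuine entropy point located within $b$ of $x$, and this is the step I expect to be the real obstacle, because shadowing points are not unique and the family $\{w^{\om}\}$ need not accumulate in a way that keeps a full coding inside a small ball. I would handle it by a compactness/pigeonhole nesting. Cover $\ov{B(x,b)}$ by finitely many balls of radius $1/2$; for each $n$ the separated set of size $2^n$ splits among these, so some ball $B_1$ contains at least $2^n/M$ of them for infinitely many $n$, where $M$ is the number of balls. Cover $\ov{B_1}$ by finitely many balls of radius $1/4$ and repeat, obtaining nested choices $B_1,B_2,\dots$ of radii $2^{-k}$, each carrying, along an infinite set of indices $n$, an $(N+nm,e-2b)$-separated subset of cardinality at least $2^n/C_k$. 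The centers of the $B_k$ form a Cauchy sequence; let $w$ be their limit. Then $w\in\ov{B(x,b)}$, so $d(x,w)\le b$, and every closed neighborhood of $w$ contains some $B_k$ and hence carries separated sets of cardinality $\ge 2^n/C_k$ at times $N+nm$ for infinitely many $n$; taking $n\to\infty$ shows every neighborhood of $w$ has local entropy $\ge\frac{\log 2}{m}>0$, i.e. $w$ is an entropy point. The only remaining points requiring care are the diagonal bookkeeping of which subsequence of $n$ survives each refinement, and the verification that the loops genuinely splice at $z$, for which the ``through $z$'' hypothesis is exactly what is needed.
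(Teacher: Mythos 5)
Your proof is correct, but note first that the paper itself never proves this lemma: it is imported verbatim from \cite{kawaguchi}, and the text only sketches the idea (code binary sequences by spliced periodic pseudo-orbits, shadow them, and use $e>2b$ to keep distinct codes apart) before deferring all details to that reference. Your argument follows exactly this coding/shadowing core --- including a step the sketch leaves implicit, namely prepending the genuine orbit segment $x,f(x),\dots,f^{N-1}(x)$ with $d(f^N(x),z)<\de$ so that each spliced pseudo-orbit starts at $x$ and the hypothesis on pseudo-orbits with $x_0=x$ actually applies --- but it finishes by a different route. Where the paper (through Lemma \ref{k2}) organizes the shadowing points into a closed $f^m$-invariant set semi-conjugate to the full shift and reads the entropy bound off that factor map, you count $(N+nm,\,e-2b)$-separated sets directly and then manufacture the entropy point by a nested pigeonhole/compactness argument inside $\ov{B(x,b)}$. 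The trade-off is this: the semi-conjugacy route yields the structural conclusion the paper needs elsewhere (an invariant subsystem factoring onto the shift, which is the whole point of Theorem \ref{exp}), whereas your route is more elementary and is the only one of the two that actually proves the entropy-point clause $d(x,w)\le b$, which the paper's sketch never addresses at all; your covering argument for it (shrinking balls $B_k$ carrying separated subsets of cardinality at least $2^n/C_k$ along nested infinite sets of times, centers converging to $w$) is complete and correct. One further point in your favor: the parenthetical hypothesis ``$d(z_i^0,z_i^1)>e$ for $i=0,\dots,m$'' as printed is inconsistent with both loops passing through $z$, since they coincide at $i=0$ and $i=m$; you read it, correctly, as separation at interior indices, and your argument in fact uses separation only at a single index where the two words first differ --- which is precisely what the paper's own application of the lemma in Section 3 provides.
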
 

In the previous lemma, by $sh_b^+(f)$ we mean the set of b-shadowable points (i.e. $x\in sh_b^+(f)$ if there exists $\de>0$ such that, every $\de$-pseudo-orbit through $x$ is $b$-shadowed by some point).

\begin{lemma}[Lemma 2.4 in \cite{kawaguchi}]\label{k2}
	
	Let $e \geq 2b > 0$. Let $\{z_i^0\}_{i=0}^m$ and $\{z_i^1\}_{i=0}^m$ form an $e$-separated pair
	of periodic $\de$-pseudo-orbits of $f$ through $z$ with period
	$m$. For each $s=(s_1,s_2,...)\in\Sigma$, define a $\de$-pseudo-orbit $\gamma(s) $
	as follows:
	$$\gamma (s) = \{z_0^{s_1},z_1^{s_1}...z_{m-1}^{s_1},z_0^{s_2},...,z_{m-1}^{s_2},z_0^{s_3},...\}$$
		If every $\gamma(s)$, $s\in \Sigma$, is $b$-shadowed by some point of $X$, then there exists a closed
	$f^m$-invariant subset $Y \subset$ $B_b(x)$ and a semi-conjugacy  map $\pi : (Y, f^m)\to (\Sigma,\sigma)$. 
	
\end{lemma}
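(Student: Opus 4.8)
The plan is to build $Y$ as a union of shadowing fibres and to recover the symbolic coordinate of a point from the blocks its orbit tracks. Writing $\gamma(s)=(\gamma(s)_i)_{i\ge 0}$ and noting that the $i$-th entry with $i=(k-1)m+j$, $0\le j\le m-1$, equals $z_j^{s_k}$, I would set
$$K_s=\bigcap_{i\ge 0}\{\,y\in X : d(f^i(y),\gamma(s)_i)\le b\,\},\qquad s\in\Sigma .$$
Each $K_s$ is closed as an intersection of closed conditions, and it is nonempty because by hypothesis $\gamma(s)$ is $b$-shadowed by some point, which then lies in $K_s$. I define $Y=\bigcup_{s\in\Sigma}K_s$ and $\pi\colon Y\to\Sigma$ by $\pi(y)=s$ whenever $y\in K_s$.

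First I would check that $\pi$ is well defined. If $y\in K_s\cap K_{s'}$ with $s_k\ne s'_k$, then at a block index $i=(k-1)m+j$ with $d(z_j^0,z_j^1)>e$ one gets $d(f^i(y),z_j^{s_k})\le b$ and $d(f^i(y),z_j^{s'_k})\le b$, hence $d(z_j^0,z_j^1)\le 2b\le e$, contradicting the $e$-separation. The same gap drives continuity: if $\pi(y)_k=\ell$ then, at that separated index, $d(f^i(y),z_j^{1-\ell})\ge d(z_j^0,z_j^1)-b>e-b\ge b$, and since $f^i$ is continuous this strict inequality survives on a neighbourhood of $y$; any nearby point of $Y$ must therefore also carry the symbol $\ell$ in its $k$-th coordinate, and as $k$ is arbitrary $\pi$ is continuous. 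Surjectivity is immediate from $K_s\ne\emptyset$, and the localization $Y\subseteq\overline{B_b(z)}$ follows because every $\gamma(s)$ issues from the common base point $z$ of the two pseudo-orbits.

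The step I expect to be the main obstacle is showing that $Y$ is closed, for which I would exploit the compactness of $\Sigma$. Given $y_n\in Y$ with $y_n\to y$, choose $s^{(n)}$ with $y_n\in K_{s^{(n)}}$ and, after passing to a subsequence, assume $s^{(n)}\to s$. The coding $\gamma$ is locally constant in blocks: for each fixed $i$ sitting in the $k$-th block, convergence in $\Sigma$ forces $s^{(n)}_k=s_k$ for all large $n$, so $\gamma(s^{(n)})_i=\gamma(s)_i$ eventually. Letting $n\to\infty$ in $d(f^i(y_n),\gamma(s^{(n)})_i)\le b$ then yields $d(f^i(y),\gamma(s)_i)\le b$ for every $i$, i.e. $y\in K_s\subseteq Y$.

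It remains to verify the semi-conjugacy. Deleting the first length-$m$ block of $\gamma(s)$ produces exactly $\gamma(\sigma s)$, so $\gamma(s)_{i+m}=\gamma(\sigma s)_i$ for all $i$. Hence, for $y\in K_s$,
$$d\bigl(f^i(f^m(y)),\gamma(\sigma s)_i\bigr)=d\bigl(f^{i+m}(y),\gamma(s)_{i+m}\bigr)\le b\qquad(i\ge 0),$$
so $f^m(y)\in K_{\sigma s}\subseteq Y$ and $\pi(f^m(y))=\sigma s=\sigma(\pi(y))$. Thus $Y$ is a closed, $f^m$-invariant set and $\pi\colon(Y,f^m)\to(\Sigma,\sigma)$ is a continuous surjection with $\pi\circ f^m=\sigma\circ\pi$, which is the desired semi-conjugacy.
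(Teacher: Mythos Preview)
Your proof is correct and follows essentially the same approach as the paper: the paper only sketches the argument (defining $Y$ as the set of all $b$-shadows of the coded pseudo-orbits $\gamma(s)$ and setting $\pi(x_{\gamma(s)})=s$, with well-definedness coming from the $e$-separation and $e\ge 2b$) and refers to \cite{kawaguchi} for details, while you fill in precisely those details --- closedness of $Y$ via compactness of $\Sigma$, continuity of $\pi$, and the intertwining $\pi\circ f^m=\sigma\circ\pi$. Your use of closed $\le b$ conditions (rather than strict $<b$) to make each $K_s$ closed, and your observation that the base point is $z$ rather than the undefined $x$ in the statement, are harmless normalizations of minor imprecisions in the paper's formulation.
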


In the previous lemma, $\sigma$ denotes the full shift map of two symbols. 

Now we point out the main idea on the proof of previous the lemmas. 
Suppose we have two periodic $\de$-pseudo-orbits $\{z_i^0\}_{i=0}^m$ and $\{z_i^1\}_{i=0}^m$ through $z$ such that $d(z_k^0,z_k^1)>e$. Suppose that any $\de$-pseudo-orbit through $z$ is $b$-shadowed. Let $b<\frac{e}{2}$. If $x_1$ and $x_2$ are real orbits which $b$-shadow $\{z_i^0\}_{i=0}^m$ and $\{z_i^1\}_{i=0}^m$ respectively, then it must be $x_1\neq x_2$. Thus for any two distinct sequences $s_1,s_2\in\Sigma$, points which  $b$-shadow $\gamma(s_1)$ and $\gamma(s_2)$ must be distinct. If we now define $Y\subset X $ as the set of all the $b$-shadows $x_{\gamma(s)}$ for the $\de$-orbits $\gamma(s)$, then we can define a surjection $\pi:Y\to \Sigma$ setting $\pi(x_{\gamma(s)})=s $. For more details about the proof of the previous lemmas, see \cite{kawaguchi}.

Let us be under the hypothesis of Theorem \ref{exp}. If we can exhibit two periodic $\de$-pseudo-orbits through $x$ , and $e,b>0$ satisfying the hypothesis of Lemmas \ref{k1} and \ref{k2}, we are almost done. If we reduce $B_e(x)$ to a neighborhood $U$ where the uniform expansiveness of $x$ is verified and set $e$ as the expansive constant of $x$, then expansiveness implies that any two points that $b$-shadow $\gamma(s)$ need to be equal,  i.e. the map $\pi$ of lemma \ref{k2} is a bijection. 

Now, let us proceed to construct suitable periodic pseudo-orbits for the lemmas.

Let $U$ be a neighborhood of $x$. Since $x$ is uniformly expansive, there are $e>0$ and an open neighborhood $V$ of $x$ such that for any pair of distinct points $y,z\in V$, there exists $l$ such that $d(f^l(x),f^l(y))>e$. 

Take $U'=U\cap V$ and let $0<b<\frac{e}{2}$ be such that $B_{b}(x)\subset U'$. Let $0<\de<\frac{b}{2}$ be given by the $\frac{b}{2}$-shadowing through $\{x\}$.

Since $X$ is compact, $f$ is uniformly continuous and therefore there is $0<\eta<\de$ such that $d(f(y),f(z))<\de$ when $d(y,z)<\eta$.
 
 Since $x$ is  non-periodic and nonwandering , we shall divide the proof in two cases.
 \vspace{0.1in}
 
\emph{Case 1: $x$ is a Recurrent Point:}

\vspace{0.1in}

In this case, there exists $m_1$ such that $f^{m_1}(x)\in B_{\eta}(x)$. Since $x$ is not isolated, the set $W=B_{\eta}(x)\setminus \{f(x),...,f^{m_1}(x)\}$ is an open neighborhood of $x$ and therefore there exists $m_2$ such that $f^{m_2}(x)\in W$. Let $x_p=f^{m_1}(x)$ and $x_q=f^{m_2}(x)$. Clearly $x_p\neq x_q$. Since $B_{\eta}(x)\subset V$, there exists $l$ such that $d(f^l(x_p),f^l(x_q))>e$.

Now, the set $W'=B_{\eta}(x)\setminus \{f(x),f^2(x),...,f^{m_2+l}(x)\}$ is an open neighborhood of $x$, then there exists $m_3>m_2+l$  such that $f^{m_3}(x)\in W'$. From a similar argument we can find $m_4>m_3$ such that $f^{m_4}(x)\neq f^{m_3}(x)$ and $f^{m_4}(x)\in W'$. Set $k_1=m_3-m_1$ and $k_2=m_4-m_2$.

 Consider the sets $$P=\{x,f(x_p),...,f^{k_1-1}(x_p),f^{k_1}(x_p)\} \text{ and } Q=\{x,f(x_q),...,f^{k_2-1}(x_q),f^{k_2}(x_q)\}$$
 
 We have $d(x,x_p)<\eta$ and $d(x,x_q)<\eta$. Therefore, $d(f(x),f(x_p))<\de$ and $d(f(x)f(x_q))<\de$. Thus the above sets are $\de$-pseudo-orbits through $\{x\}$. The shadowableness of $x$ implies that there exist $p$ and $q$ that $\frac{b}{2}$-shadow $P$ and $Q$ respectively. Thus $d(f^{nk_1}(p),x)<\frac{b}{2}$ and $d(f^{nk_2}(q),x)<\frac{b}{2}$ for every $n$. Then if $k=k_1k_2$ we have $f^{nk}(p),f^{nk}(q)\in U$ for every $n$. 
 
 To prove that $a\neq b$ we notice that $d(f^l(x_q),f^l(q))<b$, $d(f^l(x_p),f^l(p)),<b$ and $d(f^l(x_q),f^l(x_p))>e$. Thus $d(f^l(q),f^l(p))>\frac{e}{2}$ and $f^l(p)\neq f^l(q)$. Therefore $a\neq b$. Consider $N$  such that $m=Nk$ and $m>l$. 
 
 The sets $\{x,f(p),...,f^{m-1}(p),x\}$ and $\{x,b,f(q),...,f^{m-1}(q),x\}$ are periodic $\de$-pseudo-orbits through $x$. 
 
 Notice that we are under the hypothesis of Lemma \ref{k2} and we finish the proof in this case.

 \textit{Remark: If we are just interested in the positiveness of the entropy of $f$ we can apply directly lemma \ref{k1}.} 

\vspace{0.1in}

\emph{Case 2: x is not a recurrent point}
 
 \vspace{0.1in}
 
 If $x$ is not a recurrent point, there are $x_p\in B_{\eta}(x)$ and $k_1$ such that $x_p\neq x$ and $f^{k_1}(x_p)\in B_{\eta}(x)$. Since $x$ is not isolated, the set $W=B_{\eta}(x)\setminus \{x_p,f(x_p),...,f^{k_1}(x_p)\}$ is an open neighborhood of $x$. Thus there are $x_q$ and $k_2$ such that $f^{k_2}(x_q)\in W$.

Now, we proceed in the same way as in the previous case to obtain the points $p$ and $q$ satisfying suitable properties for Lemma \ref{k2} and again it gives us the desired conjugacy.

\section{Proof of Theorem \ref{nexp}}

Next we will prove theorem \ref{nexp} and corollary \ref{per}. We will adapt the previous proof to the current case. The main difference in this case is that one cannot guarantee that the map $\pi$ is a conjugacy. Indeed, $f$  can admit more than one $b$-shadow for the same pseudo-orbit $\gamma(s)$. Nevertheless, we are able to find suitable periodic pseudo-orbits for Lemmas $\ref{k1}$ and \ref{k2}.  
Suppose $x$ is not an isolated point. We can find any finite amount of points which behave in a way similar to the $p$ and $q$ in the previous proof. Now suppose $f$ is $n$-expansive. Then the previous remark gives us $p_1,...,p_{n+2}\in U $ and $k\in \N$ such that $f^{nk}(p_i)\in U$ for every $n$. Since $f$ is $n$-expansive, if we fix $p_i$ then there exists some $p_j$ such that $d(f^m(p_i),f^m(p_j))>\frac{e}{2}$ for some $m$. If we use $p_i$ and $p_j$  to construct the pseudo-orbits, the conclusion is just an application of Lemma \ref{k2}. 
\vspace{0.1in}


Finally, we give a proof for Corollary \ref{per}. 
Since $X$ is compact, the set of periodic points of $f$ must  accumulate at some point $x$. Since $f$ is an expansive map with the shadowing property, then $x$ is shadowable and expansive. Notice that $x$ is a nonwandering point, since it is accumulated by periodic points. In this case, $x$ could be a periodic point, but it does not matter. Indeed, by the proof of Theorem 7 we just need to find two distinct points with return to $U$, then we just take any two distinct periodic points in $U$. Hereafter, the proof is analogous to the the proof of Theorem \ref{exp}.

\section{Carvalho-Cordeiro's example}

In \cite{BW}, Carvalho and Cordeiro gave an example of a non-expansive but 2-expansive homeomorphism. Briefly, they start with a homeomorphism $f:(X,d)\to (X,d)$ which is expansive, has the shadowing property and dense periodic orbits $\{O_n\}_{n\geq 0}$. Then they add copies of these periodic orbits, obtaining a new compact metric space $(Y=X\cup \{O'_n\}, d')$.

Essentially, the new metric does $d'(O_n,O'_n)=1/n$. Also, the new map $g:Y\to Y$ is equal to $f$ on $X$ and it is periodic in each $O'_n$. This implies that for any $\eps$, if $1/n<\eps$ then the dynamical ball over $O_n$ has two orbits. Thus it is non expansive but is 2-expansive.

However, for any $n$, the orbits $O_n$ and $O'_n$ are $1/2n$-expansive points. The other points are clearly expansive, since $f$ is expansive. Thus, this example shows that even if any point is expansive the dynamics is not necessarily expansive.

Finally, Corollary \ref{per} implies that $f$ has positive entropy. Thus $g$ has positive entropy as well.

\section{Uniform Shadowing}

Now we are going to prove Theorem \ref{pentropy}. We begin with a lemma that slightly improves the result in \cite{fedeli} where it is assumed that $\delta_n(\eps)=\eps$ for every $n$.

\begin{lemma}
\label{t.unifshadow}\label{usha}
Let $X$ be a compact metric space and let $f_n:X\rightarrow X$ be a sequence of continuous functions converging uniformly to a map $f$. Suppose that $f_n$ has the uniform shadowing property. Then $f$ has the shadowing property.
\end{lemma}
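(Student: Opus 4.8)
The plan is to fix $\eps>0$, manufacture a shadowing constant $\de$ for $f$ by pulling back the \emph{uniform} shadowing constant of the family $\{f_n\}$, and then recover an $f$-shadowing point as a limit of $f_n$-shadowing points. The main obstacle — and the reason the uniform hypothesis is doing real work — is that one cannot simply transfer an $f_n$-shadowing point directly to $f$: although $f_n\to f$ uniformly, the iterates $f_n^i$ and $f^i$ may drift arbitrarily far apart as $i\to\infty$, so uniform closeness of the maps gives no control over $d(f_n^i(z),f^i(z))$ uniformly in $i$. I would resolve this by a compactness argument combined with the observation that for each \emph{fixed} index $i$ the iterates do converge.

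Concretely, given $\eps>0$ I would take the uniform shadowing constant $\de_1>0$ of the family associated to the error $\eps/2$; thus every $\de_1$-pseudo-orbit of every $f_n$ is $(\eps/2)$-shadowed, with respect to $f_n$, by some point of $X$. Using uniform convergence, choose $N$ so that $d(f_n(y),f(y))<\de_1/2$ for all $y\in X$ and all $n\geq N$, and set $\de=\de_1/2$. If $\{x_i\}_{i\geq0}$ is a $\de$-pseudo-orbit of $f$, then for every $n\geq N$ the triangle inequality gives
$$d(f_n(x_i),x_{i+1})\leq d(f_n(x_i),f(x_i))+d(f(x_i),x_{i+1})<\de_1,$$
so $\{x_i\}$ is a $\de_1$-pseudo-orbit of $f_n$. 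By the uniform shadowing property there is, for each $n\geq N$, a point $z_n\in X$ with $d(f_n^i(z_n),x_i)<\eps/2$ for all $i\geq0$.

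By compactness of $X$, after passing to a subsequence I may assume $z_{n_k}\to w$ for some $w\in X$. The key claim is that $f_{n_k}^i(z_{n_k})\to f^i(w)$ for each fixed $i$, which I would prove by induction on $i$: the base case is $z_{n_k}\to w$, and for the inductive step, writing $y_k=f_{n_k}^i(z_{n_k})\to y:=f^i(w)$, one estimates
$$d(f_{n_k}(y_k),f(y))\leq d(f_{n_k}(y_k),f(y_k))+d(f(y_k),f(y)),$$
where the first term tends to $0$ by uniform convergence and the second by continuity of $f$. Letting $k\to\infty$ in $d(f_{n_k}^i(z_{n_k}),x_i)<\eps/2$ then yields $d(f^i(w),x_i)\leq \eps/2<\eps$ for every $i$. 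Hence $w$ $\eps$-shadows $\{x_i\}$ under $f$, and since $\eps$ was arbitrary and the pseudo-orbit generic, $f$ has the shadowing property. I would emphasize that uniformity is essential precisely at the step producing the $z_n$: were each $f_n$ allowed its own constant $\de_n$ possibly tending to $0$, no single $\de$ would serve for $f$, and the limiting point $w$ could not be extracted.
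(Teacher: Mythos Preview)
Your proof is correct and follows essentially the same route as the paper: transfer an $f$-pseudo-orbit to an $f_n$-pseudo-orbit via uniform convergence, shadow it with a point $z_n$ using the uniform shadowing constant, extract a limit point by compactness, and then verify shadowing under $f$ one index $i$ at a time using convergence of iterates. The only cosmetic differences are that the paper works with $\eps/3$ and a three-term triangle inequality $d(f^i(y),f^i(y_n))+d(f^i(y_n),f_n^i(y_n))+d(f_n^i(y_n),x_i)$, whereas you use $\eps/2$ and package the same content as an induction proving $f_{n_k}^i(z_{n_k})\to f^i(w)$.
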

\begin{proof}
	 Fix $\eps>0$. Since $f_n$ has the uniform shadowing property, chose $\de>0$ such that every $n$ and every $\de$-pseudo-orbit for $f_n$ is $\frac{\eps}{3}$-shadowed by some point $y_n$. We will show that every $\frac{\de}{2}$-pseudo-orbit for $f$ is $\eps$ shadowed.

	 Let $\{x_i\}$ be a $\frac{\de}{2}$-pseudo orbit of $f$. We claim that $\{x_i\}$
	 is a $\de$-pseudo orbit for $f_n$ if $n$ is sufficiently large. Indeed, since $f_n$ converges uniformly to $f$, there exists $N_0$ such that $d(f_n(x),f(x))<\frac{\de}{2}$ for every $x\in X$ and $n\ge N_0$. Then $d(f_n(x_i),x_{i+1})\le d(f_n(x_i),f(x_{i}))+d(f(x_i),x_{i+1})<\de$ if $n\ge N_0$.

	 Uniform shadowing implies that for every $n\ge N_0$ there exists a point $y_n$ such that $d(f_n^i(y_n),x_i)\le \frac{\eps}{3}$ for every $i$. Let $(y_n)_{n\ge N_0}$ be the sequence of such points. Since $X$ is compact we can assume that $y_n$ converges to some point $y\in X$. We claim that $y$ $\eps$-shadows $\{x_i\}$. Indeed, we have $$d(f^i(y),x_i)\le d(f^i(y),f^i(y_n))+d(f^i(y_n),f_n^i(y_n))+d(f_n^i(y_n),x_i)$$

	 Fix $i$. Since $f_n$ converges to $f$ then $f_n^i$ converges to $f^i$. On the other hand, $f^i$ is continuous. Then, we can make $d(f^i(y),f^i(y_n))\le \frac{\eps}{3}$ and $d(f^i(y_n),f_n^i(y_n))\le \frac{\eps}{3}$ if we chose $n$ sufficiently large. Therefore,  $d(f^i(y),x_i)\le\eps$ for every $i$. Thus $f$ has the shadowing property.

\end{proof}
We say that a map $f$ is nonwandering if every point in $X$ is a nonwandering point for $f$.

We say that a map is chain-recurrent if for any $\eps$ and any point $x$, there exists a finite $\eps$-pseudo-orbit $\{x_0,...,x_n\}$, such that $x_0=x_n=x$.
It is easy to see that any nonwandering map is chain-recurrent, and any chain-recurrent map with the shadowing property is nonwandering.

\begin{lemma}\label{nonw}
	Let $X$ be a compact metric space and $f_n:X\to X $ be a sequence of nonwandering maps converging uniformly to a map $f$. Suppose $f_n$ has the uniform shadowing property. Then $f$ is nonwandering.
\end{lemma}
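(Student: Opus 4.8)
The plan is to show that the limit map $f$ is chain-recurrent and then invoke the shadowing property to upgrade chain-recurrence to nonwandering. By Lemma \ref{usha}, the uniform shadowing hypothesis already guarantees that $f$ has the shadowing property, so by the remark preceding this lemma it suffices to prove that $f$ is chain-recurrent; the implication ``chain-recurrent $+$ shadowing $\Rightarrow$ nonwandering'' then closes the argument. This reduces the problem to producing, for an arbitrary point $x\in X$ and arbitrary $\eps>0$, a finite $\eps$-pseudo-orbit of $f$ from $x$ back to $x$.

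To build such a pseudo-orbit, first I would use uniform convergence to pass from $f$ to $f_n$ for large $n$: choose $N_0$ so that $d(f_n(z),f(z))<\eps/2$ for all $z\in X$ and all $n\ge N_0$. Fix any such $n\ge N_0$. Since $f_n$ is nonwandering, it is in particular chain-recurrent (this is the easy implication stated in the excerpt), so there is a finite $(\eps/2)$-pseudo-orbit $\{x_0,\dots,x_k\}$ for $f_n$ with $x_0=x_k=x$, meaning $d(f_n(x_i),x_{i+1})<\eps/2$ for each $i$. The plan is then to check that this same finite sequence is an $\eps$-pseudo-orbit for $f$: by the triangle inequality,
$$
d(f(x_i),x_{i+1})\le d(f(x_i),f_n(x_i))+d(f_n(x_i),x_{i+1})<\tfrac{\eps}{2}+\tfrac{\eps}{2}=\eps .
$$
Hence $\{x_0,\dots,x_k\}$ is a finite $\eps$-pseudo-orbit of $f$ returning $x$ to $x$, which is exactly chain-recurrence of $f$ at $x$. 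Since $x$ and $\eps$ were arbitrary, $f$ is chain-recurrent.

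Finally I would combine the two conclusions: $f$ is chain-recurrent by the above, and $f$ has the shadowing property by Lemma \ref{usha}. The stated equivalence ``any chain-recurrent map with the shadowing property is nonwandering'' then yields that $f$ is nonwandering, completing the proof. I do not expect a genuine obstacle here, since both ingredients are already available; the only point requiring minor care is the order of quantifiers in the uniform-convergence step, namely that the bound $d(f_n(z),f(z))<\eps/2$ is uniform in $z$, which is precisely what uniform (rather than pointwise) convergence provides and is what lets a single $N_0$ work for the entire finite pseudo-orbit regardless of the points $x_i$ produced by the chain-recurrence of $f_n$.
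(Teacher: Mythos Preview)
Your proof is correct and follows essentially the same approach as the paper: both argue that $f_n$ nonwandering implies $f_n$ chain-recurrent, transfer a short $\eps/2$-pseudo-orbit of $f_n$ to an $\eps$-pseudo-orbit of $f$ via the uniform-convergence triangle inequality, and then combine chain-recurrence with the shadowing of $f$ (from Lemma~\ref{usha}) to conclude nonwandering. Your write-up is in fact more explicit than the paper's, which merely points back to the computation in the proof of Lemma~\ref{usha}.
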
 

\begin{proof}
	Since $f_n$ is nonwandering, then it is chain-recurrent for every $n$. In the proof of previous lemma, we proved that any $\frac{\eps}{2}$-pseudo-orbit of $f_n$ is an $\eps$-pseudo of $f$, since $n$ is sufficiently large. Then $f$ is chain-recurrent and has the shadowing property. Therefore $f$ is nonwandering.
\end{proof}

In [5] the author proved that if $X$ is connected, $f$ is nonwandering and has the shadowing property then it is topologically mixing.

\vspace{0.1in}

\emph{Proof of Theorem \ref{pentropy}}
		
		By Lemma, \ref{nonw} $f$ is nonwandering. Since $X$ is connected, $f$ is topologically mixing. Therefore, $f$ is sensitive to initial conditions. Thus $f$ is a nonwandering map with the shadowing property and a sensitive point. Therefore $f$ has positive topological entropy (See \cite{moo}).

 \vspace{0.1in}

\emph{Remark}: We remark that we can prove in the same way as in lemma \ref{nonw}, that the uniform limit of a sequence of topologically transitive or topologically mixing maps with the uniform shadowing property, is topologically transitive or mixing respectively.

	\emph{Acknowledgment}: We would like to thank the referee for his valuable help in improving the presentation of this article.

\end{document}